\theoremstyle{plain}
\newtheorem{theorem}{Theorem}
\numberwithin{equation}{section}
\begin{document}

\title {The Peirce axiom scheme and suprema}

\date{}

\author[P.L. Robinson]{P.L. Robinson}

\address{Department of Mathematics \\ University of Florida \\ Gainesville FL 32611  USA }

\email[]{paulr@ufl.edu}

\subjclass{} \keywords{}

\begin{abstract}

We present equivalents to the Peirce axiom scheme and highlight its relevance for pairwise suprema.

\end{abstract}

\maketitle

\medbreak

With the conditional ($\supset$) as its only logical connective and with modus ponens (MP) as its only inference rule, the Implicational Propositional Calculus (IPC) is commonly founded on the following three axiom schemes: \\ 

($IPC_0$) $\; \; [(A \supset B) \supset A] \supset A\; \; $ ({\it the Peirce axiom scheme}); \\

($IPC_1$) $\; \; A \supset (B \supset A)\; \; $ ({\it affirmation of the consequent}); \\

($IPC_2$) $\; \; [A \supset (B \supset C)] \supset [(A \supset B) \supset (A \supset C)]\; \; $ ({\it self-distributivity}). \\

Our aim here is to offer equivalents for the Peirce axiom scheme and highlight a significant r\^ole that it plays within the IPC. A convenient DIY introduction to the IPC may be found on pages 24-25 of [4]. 

\medbreak 

Throughout, we shall assume modus ponens and the axiom schemes $IPC_1$ and $IPC_2$. This comment applies in particular to the statement of theorems: MP, $IPC_1$ and $IPC_2$ will be implicit assumptions in each of them; whether or not the Peirce scheme $IPC_0$ is assumed will be stated explicitly. It is in this sense that our equivalents to the Peirce scheme should be understood: each of them is equivalent to $IPC_0$ in the presence of MP, $IPC_1$ and $IPC_2$. 

\medbreak 

It will be convenient to begin by recalling some elementary properties of IPC that do not require $IPC_0$. Perhaps the most useful of these is the Deduction Theorem (DT):  if $\Gamma, A \vdash B$ (that is, if there is a deduction of $B$ from $A$ and the set $\Gamma$ of well-formed formulas) then $\Gamma \vdash A \supset B$ (that is, there is a deduction of $A \supset B$ from $\Gamma$ alone); as a special case, if $A \vdash B$ then $A \supset B$ is a theorem. Because of this, we shall feel free to write $A \vdash B$ and $\vdash A \supset B$ interchangeably without mention. A particular consequence of DT is Hypothetical Syllogism (HS): $A \supset B, B \supset C \vdash A \supset C$ as a derived inference rule; alternatively, if $A \vdash B$ and $B \vdash C$ then $A \vdash C$. Another useful elementary fact is that $A \supset A$ is a theorem of IPC without $IPC_0$; it typically appears as a step on the way to DT (from which metatheorem it otherwise follows at once). 

\medbreak 

One direction of the following equivalence already appears in [4]. 

\medbreak 

\begin{theorem} \label{IPC_0'}
The Peirce axiom scheme $IPC_0$ is equivalent to the axiom scheme \par
${\rm (} IPC_0' {\rm )}$ $\; \; \; \; (A \supset Q) \supset \{ [ (A \supset B) \supset Q] \supset Q \}.$ 
\end{theorem}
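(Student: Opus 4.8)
The plan is to prove the two implications separately, using freely the Deduction Theorem (DT), Hypothetical Syllogism (HS), and the fact that $A \supset A$ is already a theorem of IPC without $IPC_0$.

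For the direction $IPC_0' \Rightarrow IPC_0$, which I expect to be essentially immediate, I would simply specialize the scheme $IPC_0'$ by taking $Q := A$, obtaining $(A \supset A) \supset \{[(A \supset B) \supset A] \supset A\}$. Since $A \supset A$ is a theorem, a single application of MP detaches the antecedent and yields precisely the Peirce scheme $[(A \supset B) \supset A] \supset A$.

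For the converse $IPC_0 \Rightarrow IPC_0'$, I would assume $IPC_0$ and exploit DT to reduce the goal to a deduction. Applying DT twice, it suffices to establish $A \supset Q,\ (A \supset B) \supset Q \vdash Q$. Working under these two hypotheses, I would first derive the intermediate formula $(Q \supset B) \supset Q$: invoking DT once more, I adjoin $Q \supset B$ as a hypothesis, chain $A \supset Q$ with $Q \supset B$ via HS to obtain $A \supset B$, and then detach $Q$ from $(A \supset B) \supset Q$ by MP; discharging $Q \supset B$ gives $(Q \supset B) \supset Q$. Finally I would apply the Peirce instance $[(Q \supset B) \supset Q] \supset Q$ (that is, $IPC_0$ with $A := Q$) and conclude $Q$ by MP.

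The main obstacle is locating the correct Peirce instance in the converse direction — namely recognizing that one should apply Peirce to $Q$ rather than to $A$ — and then organizing the nested deductions so that DT is applied in the right order. Once the substitution $A := Q$ in the Peirce scheme is spotted, the remaining work reduces to routine applications of HS, MP, and DT.
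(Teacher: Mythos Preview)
Your proposal is correct and matches the paper's own proof almost step for step: the easy direction is the same substitution $Q := A$ followed by MP with $\vdash A \supset A$, and for the converse both you and the paper derive $(Q \supset B) \supset Q$ from the hypotheses $A \supset Q$ and $(A \supset B) \supset Q$ (via HS/DT) and then apply the Peirce instance $[(Q \supset B) \supset Q] \supset Q$. The only cosmetic difference is that the paper first records $A \supset Q \vdash (Q \supset B) \supset (A \supset B)$ and then uses HS with $(A \supset B) \supset Q$, whereas you adjoin $Q \supset B$ and discharge it; these are the same argument.
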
 

\begin{proof} 
Assume $IPC_0$. HS yields 
$$A \supset Q, Q \supset B \vdash A \supset B$$
whence DT yields 
$$A \supset Q \vdash (Q \supset B) \supset (A \supset B).$$
Consequently, from $A \supset Q$ and $(A \supset B) \supset Q$ as hypotheses, we deduce $(Q \supset B) \supset Q$ by HS. Now, $IPC_0$ gives $\vdash [(Q \supset B) \supset Q] \supset Q$ whence an application of MP produces 
$$A \supset Q, (A \supset B) \supset Q \vdash Q$$
and two applications of DT bring us to $IPC_0'$ as a theorem scheme. \par 
Assume $IPC_0'$ as stated. With $Q$ replaced by $A$ this yields 
$$ \vdash (A \supset A) \supset \{ [ (A \supset B) \supset A] \supset A \}.$$ 
As $A \supset A$ is a theorem, an application of MP brings us to $IPC_0$ as a theorem scheme. 
\end{proof} 

\medbreak 

The approach to our second equivalent for the Peirce axiom scheme starts from a simple observation within the classical Propositional Calculus. As regards its truth-functional nature, disjunction may be expressed in terms of the conditional: explicitly, all Boolean valuations agree on $A \vee B$ and $(A \supset B) \supset B$. This is one of the less obvious semantic relationships between logical connectives, among the many to which Smullyan draws attention in [5] and elsewhere. On this point, see also D4 in Section 11 of [1]. 

\medbreak 

Accordingly, let us define (as an abbreviation) 
$$A \vee B : = (A \supset B) \supset B.$$

\medbreak 

The following is called $\vee$-Introduction in [3]. 

\medbreak

\begin{theorem} \label{Introduction}
$A \vdash A \vee B$ and $B \vdash A \vee B.$
\end{theorem}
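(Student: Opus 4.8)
The plan is to prove each entailment separately, unpacking the abbreviation $A \vee B := (A \supset B) \supset B$ and appealing to the Deduction Theorem (DT) to reduce everything to short deductions under hypotheses. Crucially, since neither entailment mentions the Peirce scheme in its statement and the surrounding discussion only invokes MP, $IPC_1$, $IPC_2$, DT and HS, I expect this to go through without $IPC_0$.

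For the second entailment $B \vdash A \vee B$, I would observe that $A \vee B$ unfolds to $(A \supset B) \supset B$, so by DT it suffices to show $B, A \supset B \vdash B$; but $B$ is already among the hypotheses, so this is immediate. More cleanly, $(A \supset B) \supset B$ is just an instance of $IPC_1$ (affirmation of the consequent) with the r\^{o}le of the antecedent played by $A \supset B$: that is, $\vdash B \supset [(A \supset B) \supset B]$, and one application of MP from the hypothesis $B$ delivers $A \vee B$. I would favor the $IPC_1$ route here since it is a single clean step.

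For the first entailment $A \vdash A \vee B$, unfolding the abbreviation and applying DT reduces the goal to the deduction $A, A \supset B \vdash B$. This is nothing but a single application of modus ponens: from $A$ and $A \supset B$ we infer $B$. Discharging the hypothesis $A \supset B$ via DT then yields $A \vdash (A \supset B) \supset B$, which is exactly $A \vdash A \vee B$.

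The main point to be careful about, rather than a genuine obstacle, is the interplay between the object-level abbreviation and the metatheoretic DT: I must keep clear that $A \vee B$ is literally the formula $(A \supset B) \supset B$, so that ``deducing $A \vee B$'' means producing its consequent $B$ under the extra hypothesis $A \supset B$ and then discharging. Once that bookkeeping is fixed, both halves are one-line deductions, and the whole result rests only on MP, $IPC_1$ and DT, confirming that $\vee$-Introduction requires nothing from the Peirce scheme.
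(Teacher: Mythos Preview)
Your proposal is correct and matches the paper's proof essentially line for line: for $A \vdash A \vee B$ the paper also uses MP to get $A, A \supset B \vdash B$ and then DT, and for $B \vdash A \vee B$ the paper likewise notes that this is just an instance of $IPC_1$. Your additional remarks about the bookkeeping and the non-use of $IPC_0$ are accurate.
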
 

\begin{proof} 
For the first deduction, MP gives $A, A \supset B \vdash B$ and then DT gives $A \vdash (A \supset B) \supset B$. The second amounts to an instance of $IPC_1$. 
\end{proof} 

\medbreak 

The result of the following theorem is called $\vee$-Elimination in [3]. 

\medbreak 

\begin{theorem} \label{Elimination}
Assume the Peirce axiom scheme. If $A \vdash Q$ and $B \vdash Q$ then $A \vee B \vdash Q.$ 
\end{theorem}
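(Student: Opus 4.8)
The plan is to exploit the equivalent $IPC_0'$ furnished by Theorem \ref{IPC_0'}, whose shape is exactly adapted to the conclusion we want. Unwinding the abbreviation, $A \vee B$ is $(A \supset B) \supset B$, so the goal $A \vee B \vdash Q$ reads $(A \supset B) \supset B \vdash Q$. First I would convert the two hypotheses into theorems: from $A \vdash Q$ and $B \vdash Q$, the Deduction Theorem gives $\vdash A \supset Q$ and $\vdash B \supset Q$.

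Next, since $IPC_0$ is assumed, Theorem \ref{IPC_0'} makes $IPC_0'$ available, namely $\vdash (A \supset Q) \supset \{ [(A \supset B) \supset Q] \supset Q \}$; feeding the theorem $A \supset Q$ into this by modus ponens yields $\vdash [(A \supset B) \supset Q] \supset Q$. It then remains to produce the antecedent $(A \supset B) \supset Q$ from the hypothesis $A \vee B$. Here I would take $(A \supset B) \supset B$ as hypothesis and combine it with the theorem $B \supset Q$ via Hypothetical Syllogism, obtaining $(A \supset B) \supset B \vdash (A \supset B) \supset Q$, that is, $A \vee B \vdash (A \supset B) \supset Q$. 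A final application of modus ponens against the theorem $[(A \supset B) \supset Q] \supset Q$ delivers $A \vee B \vdash Q$, as required.

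There is no serious obstacle once Theorem \ref{IPC_0'} is in hand; the only point requiring care is the bookkeeping about what is a genuine theorem versus what is merely deduced from the standing hypothesis $A \vee B$, so that the concluding modus ponens legitimately discharges against $A \vee B$ rather than demanding $Q$ outright. The entire force of $IPC_0$ is concentrated in the single step that promotes $[(A \supset B) \supset Q] \supset Q$ to a theorem, which isolates precisely where the Peirce scheme is indispensable.
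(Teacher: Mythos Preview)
Your proof is correct and follows essentially the same route as the paper: both invoke $IPC_0'$ from Theorem~\ref{IPC_0'} to obtain $[(A \supset B) \supset Q] \supset Q$, use HS on $(A \supset B) \supset B$ and $B \supset Q$ to produce the antecedent $(A \supset B) \supset Q$, and finish with MP. The only cosmetic difference is ordering: you apply DT to the hypotheses $A \vdash Q$ and $B \vdash Q$ at the outset, whereas the paper carries $A \supset Q$ and $B \supset Q$ as hypotheses and discharges them at the end.
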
 

\begin{proof} 
The equivalent to the Peirce scheme presented in Theorem \ref{IPC_0'} justifies 
$$A \supset Q \vdash [(A \supset B) \supset Q] \supset Q$$
while an application of HS justifies 
$$B \supset Q, (A \supset B) \supset B  \vdash (A \supset B) \supset Q.$$
An application of MP now justifies 
$$A \supset Q, B \supset Q, (A \supset B) \supset B \vdash Q$$
whence an application of DT justifies 
$$A \supset Q, B \supset Q \vdash [(A \supset B) \supset B] \supset Q$$
or 
$$A \supset Q, B \supset Q \vdash (A \vee B) \supset Q.$$
Finally, if $A \vdash Q$ and $B \vdash Q$ then $\vdash A \supset Q$ and $\vdash B \supset Q$ so that $\vdash (A \vee B) \supset Q$ and therefore $A \vee B \vdash Q$. 
\end{proof} 

\medbreak 

We may regard this theorem as providing formal justification, on the basis of the Peirce axiom scheme, for the inference rule `from $A \vdash Q$ and $B \vdash Q$ infer $A \vee B \vdash Q$' or 
$$ \frac{A \supset Q, B \supset Q}{(A \vee B) \supset Q}$$
which we shall refer to as $\vee$E (for $\vee$-Elimination). 

\medbreak 

\medbreak 

We have thus established one direction in the following result, which presents a new equivalent to the Peirce axiom scheme; recall our standing assumption that MP, $IPC_1$ and $IPC_2$ are in force. 

\medbreak 

\begin{theorem} \label{veeE}
The Peirce axiom scheme $IPC_0$: 
$$ [(A \supset B) \supset A] \supset A$$
is equivalent to the inference rule $\; \vee$E:  
$$A \supset Q, B \supset Q \vdash (A \vee B) \supset Q.$$
\end{theorem}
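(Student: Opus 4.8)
The forward implication, that $IPC_0$ yields $\vee$E, is already secured by Theorem \ref{Elimination} together with the remarks following it; so the plan is to establish the converse, assuming the rule $\vee$E and deriving $IPC_0$. The target $IPC_0$ is $[(A \supset B) \supset A] \supset A$, so by DT it suffices to produce a deduction
$$(A \supset B) \supset A \vdash A.$$
The idea is to feed a judiciously chosen disjunction into the rule $\vee$E and then collapse it onto $A$ by modus ponens.

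Concretely, I would apply $\vee$E with the first disjunct taken to be $A$, the second disjunct taken to be $A \supset B$, and the formula $Q$ of the rule taken to be $A$ itself. The two premises demanded by $\vee$E then read $A \supset A$ and $(A \supset B) \supset A$. The first is the theorem $A \supset A$ recalled in the preamble, while the second is exactly the standing hypothesis of the deduction we are building. Thus $\vee$E delivers
$$(A \supset B) \supset A \vdash [A \vee (A \supset B)] \supset A.$$

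It remains to supply the antecedent $A \vee (A \supset B)$, after which MP gives $A$ and a final application of DT yields $IPC_0$. This is where the substantive work lies, and I expect the main obstacle to be recognizing both that $A \vee (A \supset B)$ is the right disjunction to use and that it is provable without $IPC_0$. Unwinding the abbreviation, $A \vee (A \supset B)$ is the formula $[A \supset (A \supset B)] \supset (A \supset B)$, which is the contraction scheme. It is established by DT: assuming $A \supset (A \supset B)$ and then $A$, two applications of MP give first $A \supset B$ and then $B$, so two applications of DT yield $\vdash [A \supset (A \supset B)] \supset (A \supset B)$, that is, $\vdash A \vee (A \supset B)$. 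With this theorem in hand, MP followed by DT closes the argument and exhibits $IPC_0$ as a theorem scheme, completing the equivalence.
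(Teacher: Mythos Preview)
Your argument is correct, and it differs from the paper's in the instantiation of $\vee$E. The paper applies $\vee$E with disjuncts $A$ and $B$ and $Q:=A$: it must then derive both premises $A \supset A$ and $B \supset A$ from the hypothesis $(A \supset B) \supset A$ (the latter via HS with the $IPC_1$ instance $B \supset (A \supset B)$), and it must also derive the disjunction $A \vee B = (A \supset B) \supset B$ from that same hypothesis (via two MPs and DT). You instead take the disjuncts to be $A$ and $A \supset B$: now the second premise $(A \supset B) \supset A$ is literally the hypothesis, and the disjunction $A \vee (A \supset B)$ is the contraction theorem $[A \supset (A \supset B)] \supset (A \supset B)$, provable outright without any appeal to the hypothesis. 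Your route is thus a shade more economical---the hypothesis is consumed exactly once as a $\vee$E premise rather than being mined for three separate consequences---while the paper's route has the minor virtue of keeping the disjunction $A \vee B$ in its simplest form. Both are perfectly valid derivations of $IPC_0$ from $\vee$E.
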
 

\begin{proof} 
For the reverse direction, assume the inference rule $\vee$E. Now, hypothesize $(A \supset B) \supset A$. This hypothesis has as a trivial consequence the theorem $A \supset A$. An application of HS to our hypothesis and the instance $B \supset (A \supset B)$ of $IPC_1$ yields the consequence $B \supset A$. Two applications of MP give 
$$(A \supset B) \supset A, A \supset B \vdash B$$  
whence DT reveals that our hypothesis also has $(A \supset B) \supset B \; (= A \vee B$) as a consequence. In summary, $(A \supset B) \supset A$ has as consequences $A \supset A$, $B \supset A$, $A \vee B$. Next, our new inference rule $\vee$E with $Q : = A$ provides the further deduction 
$$A \supset A, B \supset A \vdash (A \vee B) \supset A.$$
Lastly, MP applied to the consequences $A \vee B$ and $ (A \vee B) \supset A$ results in 
$$(A \supset B) \supset A \vdash A$$ 
and by DT we arrive at $[(A \supset B) \supset A] \supset A$ as a theorem scheme. 
\end{proof} 

\medbreak 

A reformulation of this new equivalent to the Peirce axiom scheme is of interest. 

\medbreak 

Let $wf$ denote the set comprising all well-formed formulas. Initially, assume only MP along with $IPC_1$ and $IPC_2$. Declare $A \in wf$ and $B \in wf$ to be (syntactically) {\it equivalent} precisely when both $A \vdash B$ and $B \vdash A$ (thus, precisely when both $A \supset B$ and $B \supset A$ are theorems). This evidently defines an equivalence relation $\equiv$ on $wf$; transitivity holds by virtue of HS. Write $\mathbb{L}$ for the set of $\equiv$-classes in $wf$ and write $[A]$ for the $\equiv$-class of $A$. 

\medbreak 

The set $\mathbb{L}$ is partially ordered by declaring that $[A] \leqslant [B]$ precisely when $A \vdash B$ (equivalently, precisely when $A \supset B$ is a theorem); it is readily checked that this does indeed well-define a partial order. Notice that the poset $(\mathbb{L}, \leqslant)$ has a top element: namely, $[T]$ where $T$ is any theorem; if $A$ is any well-formed formula then $A \vdash T$ is a triviality. By contrast, $(\mathbb{L}, \leqslant)$ has no bottom element; semantically, a well-formed formula representing a bottom element must take the value 0 in any Boolean valuation, but each well-formed formula built from $\supset$ alone takes the value 1 when every propositional variable is assigned 1 as its value. 

\medbreak 

In these terms, we now have the following reformulation of Theorem \ref{veeE} (with our usual standing assumption). 

\begin{theorem} \label{sup} 
The Peirce axiom scheme $IPC_0$ is equivalent to the requirement that each pair $[A], [B]$ in $\mathbb{L}$ has $[A \vee B]$ as its supremum. 
\end{theorem}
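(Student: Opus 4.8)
The plan is to translate the statement about suprema in the poset $(\mathbb{L}, \leqslant)$ directly into provability claims about the conditional, and then invoke Theorem~\ref{veeE}, which already pins down $\vee$E as equivalent to $IPC_0$. The key observation is that "$[A \vee B]$ is the supremum of $[A]$ and $[B]$" unpacks into two halves: that $[A \vee B]$ is an \emph{upper bound} for both $[A]$ and $[B]$, and that it is the \emph{least} such upper bound. The first half is the content of $\vee$-Introduction (Theorem~\ref{Introduction}) and, crucially, requires no appeal to $IPC_0$; the second half is precisely where the Peirce scheme will enter.

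First I would recast the definitions. By the partial order on $\mathbb{L}$, we have $[A] \leqslant [A \vee B]$ iff $A \vdash A \vee B$, and $[B] \leqslant [A \vee B]$ iff $B \vdash A \vee B$; both of these are exactly Theorem~\ref{Introduction}, so $[A \vee B]$ is always an upper bound, independently of $IPC_0$. For the least-upper-bound condition, note that a class $[Q]$ is an upper bound of $[A]$ and $[B]$ precisely when $A \vdash Q$ and $B \vdash Q$. Thus $[A \vee B]$ being the \emph{least} upper bound means: whenever $A \vdash Q$ and $B \vdash Q$, we have $[A \vee B] \leqslant [Q]$, i.e. $A \vee B \vdash Q$. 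This is word-for-word the statement of $\vee$-Elimination (Theorem~\ref{Elimination}), which by its own proof is a consequence of the Peirce scheme.

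Having made this dictionary explicit, the proof reduces to checking both directions against Theorem~\ref{veeE}. For the forward direction, assume $IPC_0$: then $\vee$E holds, so given any $Q$ with $A \vdash Q$ and $B \vdash Q$ we get $A \vee B \vdash Q$, and combined with the unconditional upper-bound property this shows $[A \vee B]$ is the supremum. For the reverse direction, assume every pair has $[A \vee B]$ as supremum. Then for any $A, B, Q$ with $A \supset Q$ and $B \supset Q$ as theorems, $[Q]$ is an upper bound of $[A]$ and $[B]$, so the supremum property forces $[A \vee B] \leqslant [Q]$, that is, $(A \vee B) \supset Q$ is a theorem; this is the conclusion of $\vee$E, and hence $IPC_0$ follows by Theorem~\ref{veeE}.

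The main subtlety I would watch is keeping the logical quantifiers straight when passing between the supremum condition and the inference rule $\vee$E: the supremum requirement must hold for \emph{all} representatives and for \emph{every} candidate upper bound $[Q]$, and one must verify that this universal quantification over classes matches the schematic (all-instances) reading of $\vee$E, rather than a single fixed instance. Once that correspondence is articulated, no genuine logical work remains beyond quoting Theorems~\ref{Introduction} and~\ref{veeE}; the argument is essentially a bookkeeping translation between order-theoretic and proof-theoretic language.
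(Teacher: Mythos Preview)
Your proposal is correct and follows essentially the same route as the paper: both arguments use Theorem~\ref{Introduction} to secure the upper-bound half unconditionally, and then identify the least-upper-bound condition with $\vee$E so that Theorem~\ref{veeE} delivers the equivalence with $IPC_0$. Your write-up is in fact more explicit than the paper's three-sentence proof, and your closing remark about matching the universal quantification over classes with the schematic reading of $\vee$E flags exactly the point at which the translation needs care.
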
 

\begin{proof} 
Theorem \ref{Introduction} tells us that $[A]$ and $[B]$ have $[A \vee B]$ as an upper bound. Theorem \ref{Elimination} tells us that if the Peirce axiom scheme holds then $[A \vee B]$ is actually the least upper bound of $[A]$ and $[B]$. The reverse direction of Theorem \ref{veeE} implies that if $[A \vee B]$ is the least upper bound of $[A]$ and $[B]$ then $IPC_0$ holds. This concludes the proof. 
\end{proof} 

\medbreak 

Thus, for the Implicational Propositional Calculus, $(\mathbb{L}, \leqslant)$ is a (topped, bottomless) join-semilattice in which pairwise suprema are given by 
$$\sup \{ [A], [B] \} = [(A \supset B) \supset B].$$
For an introductory account of semilattices, we refer to [2].

\bigbreak

\begin{center} 
{\small R}{\footnotesize EFERENCES}
\end{center} 
\medbreak 

[1]  A. Church, {\it Introduction to Mathematical Logic}, Princeton University Press (1956). 

[2] G. Gr\"atzer, {\it Lattice Theory - First Concepts and Distributive Lattices}, W.H. Freeman (1971); Dover Publications (2009). 

[3] S.C. Kleene, {\it Mathematical Logic}, John Wiley (1967); Dover Publications (2002). 

[4] J. W. Robbin, {\it Mathematical Logic - A First Course}, W.A. Benjamin (1969); Dover Publications (2006).

[5] R.M. Smullyan, {\it Logical Labyrinths}, A.K. Peters (2009). 

\medbreak

\end{document}